\theoremstyle{plain}
\newtheorem{theorem}{Theorem}[section]
\newtheorem{proposition}[theorem]{Proposition}
\newtheorem{corollary}[theorem]{Corollary}
\newtheorem{definition}[theorem]{Definition}
\date{}
\title{Some remarks on ``Convergence of Picard's iteration using projection algorithm for noncyclic contractions" [Indag. Math. 30 (2019) 227--239]}
\author{Moosa Gabeleh\footnote{Department of Mathematics, Ayatollah Boroujerdi
University, Boroujerd, Iran; email:
Gabeleh@abru.ac.ir, gab.moo@gmail.com,},  Hans-Peter A. K\"unzi\footnote{Department of Mathematics and Applied Mathematics, University of Cape Town, Rondebosch 7701, South Africa; email: hans-peter.kunzi@uct.ac.za}}
\begin{document}
\maketitle

\noindent{\bf Abstract.}
In this note, at first we prove that the existence of best proximity points for cyclic relatively nonexpansive mappings is equivalent to the
existence of best proximity pairs for noncyclic relatively nonexpansive mappings in the setting of strictly convex Banach spaces by using
the projection operator. In this way, we conclude that a main result of the paper "Proximal normal
structure and relatively nonexpansive mappings", Studia Math.,
(171~(2005) 283--293) immediately follows. We then
discuss the convergence of best proximity pairs for noncyclic contractions by applying the convergence of iterative sequences
for cyclic contractions and show that the convergence method of a recent paper published in Indag. Math., 30(1) (2019) 227--239
is obtained exactly from  Picard's iteration sequence.
\\

  \maketitle\noindent {\bf Key words}:
Best proximity (point) pair; uniformly convex Banach space; noncyclic (cyclic) contraction.
\\
 \noindent {\bf 2010 Mathematics Subject Classification}:
47H09; 46B20\maketitle

\section{Introduction}

Throughout this paper $(A,B)$ is a pair of
nonempty and disjoint subsets of a normed linear space $X$. A mapping $T : A \cup
B \rightarrow A \cup B$ is said to be \emph{cyclic} if
$T(A)\subseteq B$ and $T(B)\subseteq A$. Also, $T$ is called a \emph{noncyclic mapping} $T(A)\subseteq A$ and $T(B)\subseteq B$.

\begin{definition}
A point $(p,q)\in A\times B$ is said to be a best proximity pair for the noncyclic mapping $T:A \cup B \rightarrow A \cup B$
provided that
$$
p=Tp, \quad q=Tq \quad \text{and} \quad d(p,q)={\rm dist}(A,B):=\inf\{d(x,y) : (x,y)\in A\times B\}.
$$
Also, if $T : A \cup B \rightarrow A \cup B$ is a cyclic mapping, then a point $x^\star\in A\cup B$ is called a best proximity point for $T$
provided that
$$
\|x^\star-Tx^\star\|={\rm dist}(A,B).
$$
\end{definition}

\begin{definition}
$T : A \cup B \rightarrow A \cup B$ is called a cyclic (noncyclic) relatively nonexpansive mapping, if $T$ is cyclic (noncyclic) and
$$
\|Tx-Ty\|\leq \|x-y\|, \quad \forall (x,y)\in A\times B.
$$
Also, $T$ is said to be a cyclic (noncyclic) contraction provided that $T$ is cyclic (noncyclic) and there exists $\alpha\in (0,1)$
for which
$$
d(Tx,Ty)\leq \alpha d(x,y)+ (1-\alpha) \rm{dist}(A,B),
$$
for all $(x,y)\in A\times B$.
\end{definition}

It is clear that the class of cyclic (noncyclic) relatively nonexpansive mappings contains the class of cyclic (noncyclic) contractions
as a subclass.
\par
In order to state the existence and convergence results of best proximity points (pairs) we need to recall the following concepts and
notations.

\begin{definition}
A Banach space $X$ is said to be\\
$(i)$ uniformly convex if there exists a strictly increasing function $\delta : [0,2]\to [0,1]$
such that the following implication holds for all $x,y,p\in X , R>0$ and $r\in[0,2R]$ :
$$
\begin{cases} \|x-p\|\leq R,\\
\|y-p\|\leq R,\\
\|x-y\|\geq r
\end{cases}\Rightarrow \|\frac{x+y}{2}-p\|\leq(1-\delta(\frac{r}{R}))R;
$$
$(ii)$ strictly convex if the following implication holds for all $x,y,p\in X$ and $R>0$ :
$$
\begin{cases} \|x-p\|\leq R,\\
\|y-p\|\leq R,\\
x\neq y
\end{cases}\Rightarrow \|\frac{x+y}{2}-p\|<R.
$$
\end{definition}

The proximal pair of the pair $(A,B)$ is denoted by $(A_0,B_0)$ and given by
$$
A_0=\{ x\in A\colon \| x-y'\|={\rm dist}(A,B) \text{ for some $y'\in B$}\},
$$
$$
B_0=\{ y\in B\colon \| x'-y\|={\rm dist}(A,B) \text{ for some $x'\in A$}\}.
$$
The pair $(A,B)$ is said to be a \emph{proximinal pair} if $A=A_0$ and $B=B_0$.
\par
We shall also adopt the notation
 \begin{align*}
 \delta_x(A)&=\sup\{d(x,y)\colon y\in
A\}\,\,\text{for all}\,\, x\in X,\\
\delta(A,B)&=\sup\{d(x,y)\colon x\in A ,\, y\in B\},\\
{\rm diam}(A) &=\delta(A,A).
\end{align*}

\begin{definition}\label{D11}{\rm (\cite{EKV})}
A convex pair $(K_1,K_2)$ in a Banach space $X$ is said to have a proximal normal structure {\rm (PNS)} if
for any bounded, closed, convex and proximinal pair $(H_1,H_2)\subseteq(K_1,K_2)$ for which
${\rm dist}(H_1,H_2)={\rm dist}(K_1,K_2)$ and $\delta(H_1,H_2)>{\rm dist}(H_1,H_2)$, there exists
$(x_1,x_2)\in H_1\times H_2$ such that
$$
\max\{\delta_{x_1}(H_2), \delta_{x_2}(H_1)\}<\delta(H_1,H_2).
$$
\end{definition}

It was announced in \cite{EKV} that every nonempty, bounded, closed and convex pair in a uniformly convex Banach space $X$ has PNS.
\par
Here, we state the following two existence results which are the main conclusions of \cite{EKV}.

\begin{theorem}\label{T11}{\rm (Theorem 2.1 of \cite{EKV})}
Let $(A,B)$ be a nonempty, weakly compact and convex pair in a Banach space $X$ and suppose $(A,B)$ has {\rm PNS}.
Let $T:A\cup B\to A\cup B$ be a cyclic relatively nonexpansive mapping. Then $T$ has a best proximity point.
\end{theorem}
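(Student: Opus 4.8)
\noindent\emph{A proof sketch.} Write $d={\rm dist}(A,B)$. The plan is to run the classical ``minimal invariant pair $+$ proximal normal structure'' argument.

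\emph{Reduction and a minimal pair.} First I would pass to the proximal pair: if $x\in A_0$, choosing $y\in B_0$ with $\|x-y\|=d$ gives $\|Tx-Ty\|\le\|x-y\|=d\le\|Tx-Ty\|$ (since $Tx\in B$, $Ty\in A$), so $Tx\in B_0$; likewise $T(B_0)\subseteq A_0$. As $(A,B)$ is weakly compact and convex, $(A_0,B_0)$ is nonempty, weakly compact, convex and proximinal with ${\rm dist}(A_0,B_0)=d$, and any bounded, closed, convex, proximinal sub-pair of $(A_0,B_0)$ realising $d$ is also such a sub-pair of $(A,B)$, so PNS remains available inside it; hence I may assume $(A,B)$ itself is proximinal. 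Now let $\mathcal F$ be the family of all nonempty, closed, convex pairs $(E,F)\subseteq(A,B)$ with $T(E)\subseteq F$, $T(F)\subseteq E$ and ${\rm dist}(E,F)=d$, ordered by reverse inclusion; it contains $(A,B)$. For a chain, the pair of intersections is nonempty (weak compactness), closed, convex and $T$-invariant, and it still realises $d$: the sets $\{(x,y)\in E\times F:\|x-y\|\le d\}$ are nonempty and weakly compact, they form a decreasing net, so their intersection is a nonempty set of proximal pairs inside the intersection pair. By Zorn's lemma $\mathcal F$ has a minimal element $(K_1,K_2)$.

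\emph{Structure of $(K_1,K_2)$.} I would then record two facts. (i)~The proximal pair $((K_1)_0,(K_2)_0)$ of $(K_1,K_2)$ is again in $\mathcal F$: $T$-invariance follows exactly as above (now with $d={\rm dist}(K_1,K_2)$), it is nonempty by weak compactness, and it realises $d$; minimality forces $(K_1,K_2)=((K_1)_0,(K_2)_0)$, i.e.\ $(K_1,K_2)$ is proximinal. (ii)~The pair $\bigl(\overline{\mathrm{co}}\,T(K_2),\,\overline{\mathrm{co}}\,T(K_1)\bigr)$ lies in $\mathcal F$: it is $T$-invariant because $T\bigl(\overline{\mathrm{co}}\,T(K_2)\bigr)\subseteq T(K_1)\subseteq\overline{\mathrm{co}}\,T(K_1)$, and a proximal pair $(x,y)\in K_1\times K_2$ is sent by $T$ to a proximal pair $(Ty,Tx)$ lying inside it, so it realises $d$; minimality then gives $K_1=\overline{\mathrm{co}}\,T(K_2)$ and $K_2=\overline{\mathrm{co}}\,T(K_1)$. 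Next comes the dichotomy. If $\delta(K_1,K_2)=d$, then for any $x\in K_1$ one has $d\le\|x-Tx\|\le\delta(K_1,K_2)=d$, so $x$ is a best proximity point and we are done. Otherwise $\delta(K_1,K_2)>d={\rm dist}(K_1,K_2)$; applying PNS of $(A,B)$ to the bounded, closed, convex, proximinal sub-pair $(K_1,K_2)$ produces $(u,v)\in K_1\times K_2$ with $r:=\max\{\delta_u(K_2),\delta_v(K_1)\}<\delta(K_1,K_2)$. Put $L_1=\{x\in K_1:\delta_x(K_2)\le r\}$ and $L_2=\{y\in K_2:\delta_y(K_1)\le r\}$; these are nonempty ($u\in L_1$, $v\in L_2$), closed, convex, and proper in $(K_1,K_2)$ since $\sup_{x\in K_1}\delta_x(K_2)=\delta(K_1,K_2)>r$. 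Using $K_1=\overline{\mathrm{co}}\,T(K_2)$ and continuity and convexity of $z\mapsto\|Tx-z\|$, for $x\in L_1$ one gets $\delta_{Tx}(K_1)=\sup_{y\in K_2}\|Tx-Ty\|\le\sup_{y\in K_2}\|x-y\|=\delta_x(K_2)\le r$, so $T(L_1)\subseteq L_2$, and symmetrically $T(L_2)\subseteq L_1$. Thus $(L_1,L_2)$ is a nonempty, closed, convex, $T$-invariant pair properly contained in $(K_1,K_2)$; once one knows ${\rm dist}(L_1,L_2)=d$ it belongs to $\mathcal F$, contradicting minimality, so the case $\delta(K_1,K_2)>d$ cannot occur.

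\emph{The main obstacle.} The delicate point is precisely the one glossed over twice above: that the pairs built in the reduction steps — the intersection of a chain and, above all, the sub-pair $(L_1,L_2)$ produced by PNS — still satisfy ${\rm dist}=d$, so that they genuinely sit in $\mathcal F$ and force the contradiction. This is not automatic: a pair that is minimal merely among $T$-invariant closed convex pairs, with no constraint on its proximal diameter, need not contain a best proximity point. The argument must exploit the identities $K_1=\overline{\mathrm{co}}\,T(K_2)$, $K_2=\overline{\mathrm{co}}\,T(K_1)$ together with the proximinality of $(K_1,K_2)$ and the relative nonexpansiveness of $T$ (which is what propagates the defining inequalities of $L_1$ and $L_2$ along the orbits of $T$). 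Everything else — the use of Zorn's lemma, the routine closedness/convexity bookkeeping, and the extraction of the best proximity point in the case $\delta(K_1,K_2)=d$ — is straightforward.
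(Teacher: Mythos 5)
Note first that this statement is quoted in the paper from \cite{EKV}; the note under review contains no proof of it, so the only benchmark is the original Eldred--Kirk--Veeramani argument, which is precisely the ``minimal invariant pair plus PNS'' scheme you outline. Most of your sketch matches that argument and is correct as written: the passage to $(A_0,B_0)$, the Zorn's lemma step (including the weak-compactness argument that a chain intersection still realises ${\rm dist}(A,B)$), the proximinality of the minimal pair $(K_1,K_2)$, the identities $K_1=\overline{\mathrm{co}}\,T(K_2)$ and $K_2=\overline{\mathrm{co}}\,T(K_1)$, the easy case $\delta(K_1,K_2)=d$, and the cyclic invariance $T(L_1)\subseteq L_2$, $T(L_2)\subseteq L_1$ via the convex-hull identities.

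However, the point you flag and leave open is a genuine gap, and it is exactly the one non-routine step of the proof. With $L_1,L_2$ defined by the radius $r$ taken straight from the PNS point $(x_1,x_2)$, there is no reason why ${\rm dist}(L_1,L_2)=d$: the definition of PNS does not assert that $(x_1,x_2)$ is a proximal pair, and a point $x$ with $\delta_x(K_2)\le r$ need not have any proximal companion $y$ with $\delta_y(K_1)\le r$; so $(L_1,L_2)$ need not lie in $\mathcal F$ and no contradiction with minimality follows. The missing idea is an averaging device using proximinality of $(K_1,K_2)$: choose $y_1\in K_2$ with $\|x_1-y_1\|=d$ and $u_2\in K_1$ with $\|u_2-x_2\|=d$, and put $x'=\frac12(x_1+u_2)\in K_1$, $y'=\frac12(y_1+x_2)\in K_2$. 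Then $\|x'-y'\|\le\frac12\|x_1-y_1\|+\frac12\|u_2-x_2\|=d$, so $(x',y')$ is a proximal pair, while $\delta_{x'}(K_2)\le\frac12\delta_{x_1}(K_2)+\frac12\delta_{u_2}(K_2)\le\frac12\bigl(r+\delta(K_1,K_2)\bigr)$ and likewise $\delta_{y'}(K_1)\le\frac12\bigl(\delta(K_1,K_2)+r\bigr)$. Running your construction with the enlarged radius $r'=\frac12\bigl(r+\delta(K_1,K_2)\bigr)<\delta(K_1,K_2)$ instead of $r$, the pair $(L_1',L_2')$ contains the proximal pair $(x',y')$, hence realises $d$ and belongs to $\mathcal F$; it is still closed, convex, $T$-cyclic (your $\overline{\mathrm{co}}$ argument is unchanged) and proper because $\sup_{x\in K_1}\delta_x(K_2)=\delta(K_1,K_2)>r'$, so minimality yields the desired contradiction. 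Without this (or an equivalent device) the argument, as you yourself concede, does not close.
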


\begin{theorem}\label{T12}{\rm (Theorem 2.2 of \cite{EKV})}
Let $(A,B)$ be a nonempty, weakly compact and convex pair in a strictly convex Banach space $X$ and suppose $(A,B)$ has PNS.
Let $T:A\cup B\to A\cup B$ be a noncyclic relatively nonexpansive mapping. Then $T$ has a best proximity pair.
\end{theorem}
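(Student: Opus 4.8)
The plan is to deduce the statement directly from Theorem~\ref{T11} by manufacturing, out of the noncyclic map $T$, an auxiliary \emph{cyclic} relatively nonexpansive map whose best proximity points encode fixed points of $T$; the metric projection of a strictly convex space is what makes this passage possible.

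\emph{Step 1 (reduction to the proximal pair).} Since $(A,B)$ is weakly compact and convex, a minimizing-sequence argument together with weak lower semicontinuity of the norm gives $A_0,B_0\neq\emptyset$; being sublevel sets of the convex continuous functions $x\mapsto{\rm dist}(x,B)$ and $y\mapsto{\rm dist}(y,A)$, the sets $A_0,B_0$ are closed and convex, hence weakly compact, with ${\rm dist}(A_0,B_0)={\rm dist}(A,B)$, and $(A_0,B_0)$ is proximinal. If $x\in A_0$ and $y\in B_0$ are proximal, then $\|Tx-Ty\|\le\|x-y\|={\rm dist}(A,B)$ forces equality, so $T(A_0)\subseteq A_0$ and $T(B_0)\subseteq B_0$. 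Moreover $(A_0,B_0)$ inherits PNS from $(A,B)$, since any bounded closed convex proximinal subpair of $(A_0,B_0)$ is such a subpair of $(A,B)$ with the same value of the distance. We may therefore assume $A=A_0$, $B=B_0$.

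\emph{Step 2 (the projection and the displacement vector).} For $x\in A$ let $Px\in B$ be its metric projection on $B$, which is unique by strict convexity; proximinality gives $\|x-Px\|={\rm dist}(A,B)=:d_0$. Averaging two such relations and using strict convexity shows that $x-Px$ is independent of $x$: denote it by $v$, so $\|v\|=d_0$, $B=A-v$, the projections $A\to B$ and $B\to A$ are the translations by $-v$ and $+v$, and $\|\delta+v\|\ge\|v\|$ for all $\delta\in A-A$, with equality only for $\delta=0$ (if $\|\delta+v\|=\|v\|$, then $\|\tfrac12(\delta+v)+\tfrac12 v\|\le\|v\|$ by strict convexity while $\|\tfrac\delta2+v\|\ge\|v\|$, forcing $\delta+v=v$). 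Applying relative nonexpansiveness of $T$ to the proximal pair $(a,a-v)$ shows $T(a-v)=Ta-v$, so that a best proximity pair for $T$ is precisely a pair $(p,p-v)$ with $p\in A$ and $Tp=p$; it thus suffices to produce a fixed point of $T|_A$.

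\emph{Step 3 (the auxiliary cyclic map).} Equip $X\times X$ with the norm $\|(u,w)\|=\|u\|+\|w\|$ and set
\[
\hat A=\{(x,x-v):x\in A\},\quad \hat B=\{(w-v,w):w\in A\},\quad \hat S(x,x-v)=(Tx-v,Tx),\quad \hat S(w-v,w)=(Tw,Tw-v).
\]
Then $\hat A,\hat B$ are weakly compact and convex (affine images of $A$), $\hat S(\hat A)\subseteq\hat B$, $\hat S(\hat B)\subseteq\hat A$, and ${\rm dist}(\hat A,\hat B)=2d_0$. Applying relative nonexpansiveness of $T$ to the pairs $(x,w-v)$ and $(w,x-v)$ in $A\times B$ yields $\|(Tx-Tw)+v\|\le\|(x-w)+v\|$ and $\|(Tx-Tw)-v\|\le\|(x-w)-v\|$; adding these gives $\|\hat Sz_1-\hat Sz_2\|\le\|z_1-z_2\|$ for $(z_1,z_2)\in\hat A\times\hat B$, so $\hat S$ is cyclic relatively nonexpansive. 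One must also verify that $(\hat A,\hat B)$ has PNS: its bounded closed convex proximinal subpairs at distance $2d_0$ are exactly $(\{(x,x-v):x\in H\},\{(w-v,w):w\in H\})$ for convex closed bounded $H\subseteq A$ (strict convexity forces the two index sets to coincide), and a non-diametral point is to be extracted from PNS of $(A,B)$ applied to $(H,H-v)$.

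\emph{Step 4 (conclusion).} By Theorem~\ref{T11}, $\hat S$ has a best proximity point, which may be taken in $\hat A$: say $(p,p-v)$ with $\|(p,p-v)-\hat S(p,p-v)\|=2d_0$, i.e. $\|(p-Tp)+v\|+\|(p-Tp)-v\|=2d_0$. Each summand is $\ge d_0$ by Step~2, so both equal $d_0$; in particular $\|(p-Tp)+v\|=\|v\|$, and since $p-Tp\in A-A$ this forces $p-Tp=0$. Hence $Tp=p$, and $(p,p-v)\in A\times B$ is the desired best proximity pair. The part I expect to be genuinely delicate is in Step~3: relative nonexpansiveness of $\hat S$ hinges entirely on the two displayed inequalities, which is exactly where strict convexity enters through the projection and the constant displacement $v$; and, more subtly, one has to show that PNS passes to the auxiliary pair $(\hat A,\hat B)$, for which the $\delta$-quantities in $X\times X$ must be carefully related to those of the subpairs $(H,H-v)$ of $(A,B)$. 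The remaining steps are routine once the strict-convexity structure of Step~2 is available.
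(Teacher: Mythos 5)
Your Steps 1, 2 and 4 are correct (and Step 2 is a nice explicit description of what the paper's projection operator $\mathcal{P}$ of Proposition \ref{P11} becomes on a proximinal pair in a strictly convex space: $B_0=A_0-v$ and $\mathcal{P}$ is translation by $\mp v$). The genuine gap is exactly the point you flag in Step 3 and then leave as a sketch: PNS for the auxiliary pair $(\hat A,\hat B)$ in $X\times X$ with the $\ell_1$-product norm. This cannot be waved through. The product space is not strictly (let alone uniformly) convex even when $X$ is, so no general result supplies PNS there; it must be extracted from the abstract PNS hypothesis on $(A,B)$. Your proposed extraction does not close: for a diagonal subpair indexed by $H$, the quantity to beat is $D=\sup_{x,w\in H}\bigl(\|x-w+v\|+\|x-w-v\|\bigr)$, where the two summands are coupled at the same pair $(x,w)$, whereas PNS of $(A,B)$ applied to $(H,H-v)$ only gives a point $x_1$ with $\sup_{w}\|x_1-w+v\|<\Delta:=\delta(H,H-v)$, with no quantitative control and no control at all on the companion term $\sup_w\|x_1-w-v\|$ beyond $\le\Delta$. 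The resulting bound is only $<2\Delta$, while $D$ can be strictly smaller than $2\Delta$ (e.g. for a Euclidean disc $H$ one has $D=2\sqrt{d_0^2+\mathrm{diam}(H)^2}<2\Delta=2(d_0+\mathrm{diam}(H))$), so the non-diametral point you extract need not be non-diametral for $(\hat H_1,\hat H_2)$. Since the theorem assumes only strict convexity plus PNS, you cannot fall back on uniform convexity of $X$ to repair this, and as written the proof is incomplete at its decisive step.

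The irony is that your own Step 2 already contains everything needed to argue as the paper does, without any product space. Once $B_0=A_0-v$, $T(a-v)=Ta-v$, and $\|\delta+v\|\ge\|v\|$ on $A_0-A_0$ with equality only at $\delta=0$, the map $S:=T\mathcal{P}$, i.e. $x\mapsto Tx-v$ on $A_0$ and $y\mapsto Ty+v$ on $B_0$, is cyclic on $(A_0,B_0)$ and relatively nonexpansive because $\mathcal{P}$ is an isometry commuting with $T$. The pair $(A_0,B_0)$ is weakly compact, convex and inherits PNS from $(A,B)$ exactly as in your Step 1, so Theorem \ref{T11} applies in $X$ itself and yields $x^\star\in A_0$ with $\|x^\star-Tx^\star+v\|=d_0=\|v\|$; your equality case in Step 2 then gives $Tx^\star=x^\star$, and $(x^\star,x^\star-v)$ is the best proximity pair. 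This is precisely the route of Theorem \ref{T21} and Corollary \ref{C21} in the paper, and it avoids the unproved PNS transfer entirely.
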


In 2006 the next existence, uniqueness and convergence result of a best proximity point for cyclic contractions was established.

\begin{theorem}\label{T13}{\rm (Theorem 3.10 of \cite{EV})}
Let $(A,B)$ be a nonempty, closed and convex pair in a uniformly
convex Banach space $X$ and let $T:A\cup B\rightarrow A\cup B$ be
a cyclic contraction map. For $x_0\in A$, define $x_{n+1}:=Tx_n$
for each $n\geq 0$. Then there exists a unique $x^\star\in A$ such that
$x_{2n}\rightarrow x^\star$ and $\|x^\star-Tx^\star\|={\rm dist}(A,B)$.
\end{theorem}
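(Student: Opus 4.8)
\medskip
\noindent The plan is to run the classical four‑stage argument. Throughout, write $d:=\mathrm{dist}(A,B)$ and $d_n:=\|x_n-x_{n+1}\|$; note that $x_{2n}\in A$ and $x_{2n+1}\in B$ for every $n$, and that the contraction estimate holds for pairs in $(A\times B)\cup(B\times A)$ by symmetry of the norm. First I would extract the purely metric consequences of the contraction condition: applying it to $(x_n,x_{n+1})$ gives $d_{n+1}\le\alpha d_n+(1-\alpha)d$, hence $d_n\le\alpha^n d_0+(1-\alpha^n)d\to d$. No convexity is used at this stage.

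Second, I would isolate the geometric lemma that carries the analytic content: if $\{u_k\},\{v_k\}\subseteq A$ and $\{w_k\}\subseteq B$ satisfy $\|u_k-w_k\|\to d$ and $\|v_k-w_k\|\to d$, then $\|u_k-v_k\|\to0$ (together with its mirror image interchanging $A$ and $B$, which is legitimate because both $A$ and $B$ are convex). Here uniform convexity enters: $\tfrac12(u_k+v_k)\in A$ forces $\|\tfrac12(u_k+v_k)-w_k\|\ge d$, whereas with $R_k:=\max\{\|u_k-w_k\|,\|v_k-w_k\|\}\to d$ the definition gives $\|\tfrac12(u_k+v_k)-w_k\|\le\bigl(1-\delta(\|u_k-v_k\|/R_k)\bigr)R_k$; if $\|u_k-v_k\|\ge\varepsilon$ along a subsequence these two bounds are incompatible in the limit (using that $\delta$ is strictly increasing and $d>0$; the degenerate case $d=0$ is handled by the triangle inequality alone). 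Applying the lemma to $(x_{2k+2},x_{2k},x_{2k+1})$ and (in its mirror form) to $(x_{2k+3},x_{2k+1},x_{2k+2})$, together with $d_n\to d$ from Stage~1, yields $\|x_{2k+2}-x_{2k}\|\to0$ and $\|x_{2k+3}-x_{2k+1}\|\to0$.

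Third --- and this is the step I expect to be the main obstacle --- I would show that $\{x_{2n}\}$ is Cauchy. The crux is the claim that for every $\varepsilon>0$ there is an $N$ with $\|x_{2m}-x_{2n+1}\|\le d+\varepsilon$ whenever $m>n\ge N$. If it failed, one fixes $\varepsilon_0>0$ and, for each $k$, indices $m_k>n_k\ge k$ with $m_k$ minimal subject to $\|x_{2m_k}-x_{2n_k+1}\|>d+\varepsilon_0$, so that $\|x_{2m_k-2}-x_{2n_k+1}\|\le d+\varepsilon_0$ for $k$ large (the case $m_k=n_k+1$ being excluded since $d_{2n_k+1}\to d$). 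Applying the contraction twice gives $\|x_{2m_k}-x_{2n_k+1}\|\le\alpha^2\|x_{2m_k-2}-x_{2n_k-1}\|+(1-\alpha^2)d$, and bounding $\|x_{2m_k-2}-x_{2n_k-1}\|\le\|x_{2m_k-2}-x_{2n_k+1}\|+\|x_{2n_k+1}-x_{2n_k-1}\|$ with the last term tending to $0$ by Stage~2, one reaches $\varepsilon_0\le\alpha^2\varepsilon_0$ in the limit, a contradiction. Granting the claim, if $\{x_{2n}\}$ were not Cauchy there would be $\varepsilon>0$ and $m_k>n_k\to\infty$ with $\|x_{2m_k}-x_{2n_k}\|\ge\varepsilon$; but $x_{2m_k},x_{2n_k}\in A$ are each within $d+o(1)$ of $x_{2n_k+1}\in B$ (by the claim together with $d_{2n_k}\to d$), so the Stage~2 lemma forces $\|x_{2m_k}-x_{2n_k}\|\to0$, a contradiction. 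Since $A$ is closed, $x_{2n}\to x^\star$ for some $x^\star\in A$.

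Finally I would verify that $x^\star$ is a best proximity point and the unique one in $A$. From $\|x^\star-x_{2n-1}\|\le\|x^\star-x_{2n}\|+d_{2n-1}\to d$ and $\|x^\star-x_{2n-1}\|\ge d$ we get $\|x^\star-x_{2n-1}\|\to d$; hence $\|Tx^\star-x_{2n}\|=\|Tx^\star-Tx_{2n-1}\|\le\alpha\|x^\star-x_{2n-1}\|+(1-\alpha)d\to d$, and since $x_{2n}\to x^\star$ the left-hand side tends to $\|x^\star-Tx^\star\|$, so $\|x^\star-Tx^\star\|=d$. For uniqueness, let $p,q\in A$ be best proximity points. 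Then $\|T^2p-Tp\|\le\alpha\|Tp-p\|+(1-\alpha)d=d$, so $\|T^2p-Tp\|=d=\|p-Tp\|$, and strict convexity applied to $\tfrac12(p+T^2p)\in A$ (also at distance $d$ from $Tp\in B$) gives $T^2p=p$; likewise $T^2q=q$. Two uses of the contraction then give $\|p-Tq\|=\|T^2p-Tq\|\le\alpha\|Tp-q\|+(1-\alpha)d$ and $\|Tp-q\|=\|Tp-T^2q\|\le\alpha\|p-Tq\|+(1-\alpha)d$, whence $\|p-Tq\|\le\alpha^2\|p-Tq\|+(1-\alpha^2)d$ and therefore $\|p-Tq\|=d$. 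Thus $p$ lies at distance $d$ from both $Tp$ and $Tq$ in $B$, so strict convexity yields $Tp=Tq$ and then $p=T^2p=T^2q=q$. In particular $x^\star$ is the unique best proximity point of $T$ in $A$, which completes the plan.
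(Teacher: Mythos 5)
The paper does not prove this statement at all --- it is quoted verbatim as Theorem 3.10 of \cite{EV} and used as a black box --- so the only meaningful comparison is with the original Eldred--Veeramani argument, and your proposal is a correct reconstruction of exactly that proof: the contraction estimate $d_n\to{\rm dist}(A,B)$, the uniform-convexity lemma forcing $\|x_{2n+2}-x_{2n}\|\to 0$, the ``$\|x_{2m}-x_{2n+1}\|\le d+\varepsilon$ eventually'' claim via a minimal index and a double application of the contraction, Cauchyness and convergence, and uniqueness through $T^2p=p$ plus strict convexity. No gaps; the only points worth a line in a written version are the degenerate case ${\rm dist}(A,B)=0$ in the uniqueness step (trivial, as you note for the lemma) and the explicit use of completeness and closedness of $A$ when passing to the limit.
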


Just recently, the noncyclic version of Theorem \ref{T13} was proved in \cite{G2}. Before stating that we recall the following requirements.
\par
For a nonempty subset $A$ of $X$ a \emph{metric projection operator} $\mathcal{P}_A:X\to 2^A$ is defined as
$$
\mathcal{P}_A(x):=\{y\in A : \|x-y\|={\rm dist}(\{x\},A)\},
$$
where $2^A$ denotes the set of all subsets of $A$. It is well known that if $A$ is a nonempty, closed and convex subset of a reflexive and
strictly convex Banach space $X$, then the metric projection $\mathcal{P}_A$ is single valued from $X$ to $A$, that is,
$\mathcal{P}_A:X\to A$ is a mapping with $\|x-\mathcal{P}_A(x)\|={\rm dist}(\{x\},A)$ for any $x\in X$.

\begin{proposition}{\rm (\cite{G1, G2})}\label{P11}
Let $(A,B)$ be a nonempty, bounded, closed and convex pair in a reflexive and strictly convex Banach space $X$. Define $\mathcal{P}:A_0\cup B_0\to A_0\cup B_0$
as
\begin{align}
\mathcal{P}(x)=\begin{cases}\mathcal{P}_{A_0}(x) \quad \text{if} \ x\in B_0,\\
\mathcal{P}_{B_0}(x) \quad \text{if} \ x\in A_0.\end{cases}
\end{align}
Then the following statements hold.\\
$(1)$ \ $\mathcal{P}$ is cyclic on $A_0\cup B_0$ and $\|x-\mathcal{P}x\|={\rm dist}(A,B)$ for any $x\in A_0\cup B_0$,\\
$(2)$ \ $\mathcal{P}$ is an isometry, that is, $\|\mathcal{P}x-\mathcal{P}y\|=\|x-y\|$ for all $(x,y)\in A_0\times B_0$,\\
$(3)$ \ $\mathcal{P}$ is affine,\\
$(4)$ \ $\mathcal{P}^2|_{A_0}=i_{A_0}$ and $\mathcal{P}^2|_{B_0}=i_{B_0}$,\\
$(5)$ \ $\mathcal{P}|_{A_0}$ and $\mathcal{P}|_{B_0}$ are continuous.
\end{proposition}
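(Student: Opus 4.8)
\medskip\noindent\textit{Proof strategy.}
First I would pin down $A_0$ and $B_0$ and dispose of $(1)$. Since $X$ is reflexive and $A,B$ are bounded, closed and convex, they are weakly compact, so the (weakly lower semicontinuous) map $(x,y)\mapsto\|x-y\|$ attains its infimum on $A\times B$; hence $A_0\neq\emptyset$, and if $(x_0,y_0)$ realises ${\rm dist}(A,B)$ then $x_0\in A_0$, $y_0\in B_0$ and ${\rm dist}(A_0,B_0)={\rm dist}(A,B)=:d$. Next, $A_0=A\cap\{x:{\rm dist}(\{x\},B)\le d\}$ is the intersection of the closed convex set $A$ with a sublevel set of the continuous convex function $x\mapsto{\rm dist}(\{x\},B)$, hence closed and convex, and the same holds for $B_0$. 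By the fact recalled just before the proposition, $\mathcal{P}_{A_0}$ and $\mathcal{P}_{B_0}$ are then single valued, so $\mathcal{P}$ is well defined. Part $(1)$ now follows at once: $\mathcal{P}(A_0)=\mathcal{P}_{B_0}(A_0)\subseteq B_0$ and $\mathcal{P}(B_0)=\mathcal{P}_{A_0}(B_0)\subseteq A_0$; and for $x\in A_0$ some point of $B_0$ lies at distance $d$ from $x$ while none lies closer (as ${\rm dist}(A_0,B_0)=d$), so $\|x-\mathcal{P}x\|={\rm dist}(\{x\},B_0)=d$, and symmetrically on $B_0$.

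The step I would isolate as the heart of the matter is a rigidity statement: the displacement $x-\mathcal{P}_{B_0}x$ does not depend on $x\in A_0$. To prove it, take $x_1,x_2\in A_0$, set $y_i=\mathcal{P}_{B_0}x_i\in B_0$ so that $\|x_i-y_i\|=d$, and use convexity of $A_0$ and $B_0$ to place $\tfrac12(x_1+x_2)\in A_0$ and $\tfrac12(y_1+y_2)\in B_0$; then
\[
d\ \le\ \Big\|\tfrac{x_1+x_2}{2}-\tfrac{y_1+y_2}{2}\Big\|\ =\ \Big\|\tfrac{(x_1-y_1)+(x_2-y_2)}{2}\Big\|\ \le\ \tfrac12\|x_1-y_1\|+\tfrac12\|x_2-y_2\|\ =\ d,
\]
so all of these inequalities are equalities, and the strict convexity implication (with $p=0$, $R=d$) applied to the norm-$d$ vectors $x_1-y_1$ and $x_2-y_2$ forces $x_1-y_1=x_2-y_2$. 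Thus there is a fixed $v\in X$ with $\|v\|=d$ and $\mathcal{P}_{B_0}x=x-v$ for every $x\in A_0$. For $y\in B_0$ I would then set $a:=\mathcal{P}_{A_0}y$ (so $\|a-y\|=d$ by $(1)$); since $a-v=\mathcal{P}_{B_0}a$ and $y$ are both nearest points of $B_0$ to $a$, uniqueness of the metric projection gives $y=a-v$, i.e.\ $\mathcal{P}_{A_0}y=a=y+v$, and in particular $B_0=A_0-v$.

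Everything else is then formal. For $(3)$: on the convex set $A_0$, $\mathcal{P}$ is the affine map $x\mapsto x-v$, and on $B_0$ it is $y\mapsto y+v$, so $\mathcal{P}$ is affine on each piece. For $(4)$: if $x\in A_0$ then $\mathcal{P}^2x=\mathcal{P}_{A_0}(x-v)=(x-v)+v=x$, so $\mathcal{P}^2|_{A_0}=i_{A_0}$, and symmetrically $\mathcal{P}^2|_{B_0}=i_{B_0}$. For $(5)$: $\mathcal{P}|_{A_0}$ and $\mathcal{P}|_{B_0}$ are translations, hence Lipschitz, hence continuous. For $(2)$, read as the assertion that $\mathcal{P}$ carries $A_0$ isometrically onto $B_0$ and $B_0$ isometrically onto $A_0$: for $x_1,x_2\in A_0$ one has $\|\mathcal{P}x_1-\mathcal{P}x_2\|=\|(x_1-v)-(x_2-v)\|=\|x_1-x_2\|$, and likewise on $B_0$. (One should note that for mixed arguments $x\in A_0$, $y\in B_0$ the computation only gives $\|\mathcal{P}x-y\|=\|x-\mathcal{P}y\|$ --- apply the above isometry to $x$ and $\mathcal{P}y$ --- whereas $\mathcal{P}x-\mathcal{P}y=(x-y)-2v$, whose norm equals $\|x-y\|$ in Hilbert space but need not do so in a general strictly convex space.)

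The only genuinely non-routine step is the rigidity claim in the second paragraph, and the thing to watch there is that it invokes all three standing hypotheses simultaneously: reflexivity (to know $A_0$, $B_0$ are nonempty and that the metric projections exist at all), convexity of the pair $(A,B)$ (so that $A_0$, $B_0$ are convex and the midpoints above actually lie in them), and strict convexity (to pass from the equality of norms to the equality of the displacement vectors $x_i-y_i$). Everything downstream of that reduces to translations on convex sets.
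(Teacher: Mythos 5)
The paper does not actually reprove Proposition~\ref{P11} (it is quoted from \cite{G1,G2}), so there is no in-paper argument to measure you against; judged on its own, your translation--rigidity argument is sound and is essentially the standard route. Once you know $\mathcal{P}_{B_0}x=x-v$ on $A_0$ and $\mathcal{P}_{A_0}y=y+v$ on $B_0$ (your midpoint/strict-convexity step, which is correct, with the harmless caveat that the strict-convexity implication needs $d>0$; if $d=0$ the displacements are all zero, and disjointness of $A,B$ even makes $A_0$ empty), parts $(1)$, $(3)$, $(4)$, $(5)$ and the fact that $\mathcal{P}|_{A_0}$, $\mathcal{P}|_{B_0}$ are mutually inverse isometries between $A_0$ and $B_0$ all fall out formally, exactly as you say.

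The one place where you do not prove the statement as written is $(2)$, and your hedge there is not mere caution: the literal claim $\|\mathcal{P}x-\mathcal{P}y\|=\|x-y\|$ for \emph{mixed} pairs $(x,y)\in A_0\times B_0$ is false in general strictly convex (even uniformly convex) spaces, so no argument could have closed that gap. Concretely, in $X=(\mathbb{R}^2,\|\cdot\|_4)$ take $v=(2,1)$, $w=(1,-8)$, $A=A_0=\{tw: t\in[0,1]\}$, $B=B_0=A_0-v$. The norming functional of $v$ is proportional to $(8,1)$, which annihilates $w$, so $\|v+tw\|_4\ge\|v\|_4$ for all $t$; hence ${\rm dist}(A,B)=\|v\|_4=17^{1/4}$ and $\mathcal{P}$ is precisely the translation by $\mp v$ of your rigidity lemma. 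For $x=w\in A_0$ and $y=-v\in B_0$ one gets $\|x-y\|_4=\|(3,-7)\|_4=2482^{1/4}$ while $\|\mathcal{P}x-\mathcal{P}y\|_4=\|w-v\|_4=\|(-1,-9)\|_4=6562^{1/4}$. What is true in general is exactly what you proved: the two restricted isometries plus the cross identity $\|\mathcal{P}x-y\|=\|x-\mathcal{P}y\|$ (indeed $\mathcal{P}x-y=x-\mathcal{P}y=(x-y)-v$); the full mixed isometry needs extra structure, e.g.\ a Hilbert space, where $v$ is orthogonal to $A_0-A_0$. This deserves to be flagged explicitly rather than handled by reinterpreting $(2)$, because the mixed form is what gets used downstream, for instance in the step $\|T\mathcal{P}x-T\mathcal{P}y\|=\|\mathcal{P}Tx-\mathcal{P}Ty\|=\|Tx-Ty\|$ in the proof of Theorem~\ref{T21}, where $(Tx,Ty)\in A_0\times B_0$ is a mixed pair.
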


We are now ready to state a main result of \cite{G2}.

\begin{theorem}\label{T14}{\rm (Theorem 3.2 of \cite{G2})}
Let $(A,B)$ be a nonempty, closed and convex pair in a uniformly convex Banach space $X$ and $T$ a noncyclic contraction
mapping defined on $A\cup B$. Suppose $x_0\in A_0$ and define
\begin{align}
\begin{cases}x_n=T^nx_0,\\ y_n=\mathcal{P}x_n, \end{cases}
\end{align}
for all $n\in\mathbb{N}$, where $\mathcal{P}$ is the projection operator defined in (1). Then the sequence $\{(x_n,y_n)\}\subseteq A_0\times B_0$ converges to
a best proximity pair of the mapping $T$.
\end{theorem}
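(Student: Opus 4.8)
The plan is to recognize the projection algorithm $\{(x_n,y_n)\}$ as nothing but the Picard orbit of a single \emph{cyclic} contraction, and then to quote Theorem~\ref{T13}. First I would pass to the proximal pair $(A_0,B_0)$, which is nonempty ($x_0\in A_0$ and $\mathcal Px_0\in B_0$), closed (write $A_0=\{a\in A:\mathrm{dist}(\{a\},B)\le\mathrm{dist}(A,B)\}$, the preimage of a closed half-line under the continuous map $a\mapsto\mathrm{dist}(\{a\},B)$) and convex, so that Proposition~\ref{P11} applies to it and $\mathcal P$ is a single-valued operator on $A_0\cup B_0$ enjoying properties (1)--(5). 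The noncyclic contraction condition then forces $T(A_0)\subseteq A_0$ and $T(B_0)\subseteq B_0$: if $(x,y)\in A_0\times B_0$ realizes $\mathrm{dist}(A,B)$, then $\mathrm{dist}(A,B)\le\|Tx-Ty\|\le\alpha\,\mathrm{dist}(A,B)+(1-\alpha)\mathrm{dist}(A,B)=\mathrm{dist}(A,B)$; in particular $x_n=T^nx_0\in A_0$ for every $n$. The same computation with $y$ replaced by $\mathcal Px$ shows $\|Tx-T\mathcal Px\|=\mathrm{dist}(A,B)$, whence $T\mathcal Px\in\mathcal P_{B_0}(Tx)$ and, by single-valuedness, $T\mathcal P=\mathcal PT$ on $A_0\cup B_0$. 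Combining this with the isometry property (2), for $(x,y)\in A_0\times B_0$ one gets $\|\mathcal PTx-\mathcal PTy\|=\|Tx-Ty\|\le\alpha\|x-y\|+(1-\alpha)\mathrm{dist}(A,B)$, and $\mathcal PT(A_0)\subseteq\mathcal P(A_0)\subseteq B_0$, $\mathcal PT(B_0)\subseteq A_0$; so $\mathcal PT$ is a cyclic contraction on the closed convex pair $(A_0,B_0)$ in the uniformly convex space $X$, with $\mathrm{dist}(A_0,B_0)=\mathrm{dist}(A,B)$.

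Next I would identify the iterates. Setting $z_n:=(\mathcal PT)^nx_0$, the commutation $T\mathcal P=\mathcal PT$ together with $\mathcal P^2|_{A_0}=i_{A_0}$ gives, by an easy induction (using $x_n\in A_0$), that $z_{2m}=x_{2m}$ and $z_{2m+1}=\mathcal Px_{2m+1}=y_{2m+1}$; that is, $x_0,y_1,x_2,y_3,\dots$ is precisely the Picard sequence of the cyclic contraction $\mathcal PT$ starting from $x_0\in A_0$. By Theorem~\ref{T13} there is a unique $x^\star\in A_0$ with $x_{2n}=z_{2n}\to x^\star$ and $\|x^\star-\mathcal PTx^\star\|=\mathrm{dist}(A,B)$. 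The crucial step is to promote $x^\star$ to a fixed point of $T$ itself: since $\mathcal PTx^\star\in B_0$ lies at distance $\mathrm{dist}(A,B)$ from $x^\star$, it equals $\mathcal P_{B_0}(x^\star)=\mathcal Px^\star$; applying $\mathcal P$ and using $Tx^\star\in A_0$ with $\mathcal P^2|_{A_0}=i_{A_0}$ yields $Tx^\star=x^\star$. Putting $y^\star:=\mathcal Px^\star\in B_0$, the commutation gives $Ty^\star=\mathcal PTx^\star=\mathcal Px^\star=y^\star$, and $\|x^\star-y^\star\|=\mathrm{dist}(A,B)$, so $(x^\star,y^\star)$ is a best proximity pair of $T$.

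Finally I would prove convergence of the entire sequence $\{x_n\}$, not just of its even terms. For the odd terms, $x_{2n+1}=(\mathcal PT)^{2n}x_1$ is the even subsequence of the Picard orbit of $\mathcal PT$ issued from $x_1=Tx_0\in A_0$, so Theorem~\ref{T13} makes it converge to the unique best proximity point of $\mathcal PT$ lying in $A_0$. That point must be $x^\star$: any best proximity point $p\in A_0$ of $\mathcal PT$ satisfies $\mathcal PTp=\mathcal Pp$, hence $Tp=p$, hence $T^2p=p$; and the $T^2$-fixed point in $A_0$ is unique, because for two such $p,p'$, applying the contraction estimate twice on $A_0\times B_0$ gives $\|p-\mathcal Pp'\|=\|T^2p-T^2\mathcal Pp'\|\le\alpha^2\|p-\mathcal Pp'\|+(1-\alpha^2)\mathrm{dist}(A,B)$, forcing $\|p-\mathcal Pp'\|=\mathrm{dist}(A,B)$, so $\mathcal Pp'=\mathcal Pp$ and then $p'=p$. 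Hence $x_n\to x^\star$; and since every $x_n\in A_0$ and $\mathcal P|_{A_0}$ is continuous (property (5)), $y_n=\mathcal Px_n\to\mathcal Px^\star=y^\star$. Therefore $(x_n,y_n)\to(x^\star,y^\star)$, a best proximity pair, which is the assertion.

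The step I expect to be the main obstacle is this last one: Theorem~\ref{T13} only delivers convergence of the even-indexed iterates, so an extra argument is unavoidable, and it genuinely needs $Tx^\star=x^\star$ (not merely $T^2x^\star=x^\star$) and the uniqueness of the best proximity point. Everything else is bookkeeping with the three structural facts $T\mathcal P=\mathcal PT$, $\|\mathcal Px-\mathcal Py\|=\|x-y\|$ on $A_0\times B_0$, and $\mathcal P^2|_{A_0}=i_{A_0}$. (Alternatively, once $Ty^\star=y^\star$ is known one can bypass the uniqueness discussion: $\|x_{2n+1}-y^\star\|=\|Tx_{2n}-Ty^\star\|\le\alpha\|x_{2n}-y^\star\|+(1-\alpha)\mathrm{dist}(A,B)\to\mathrm{dist}(A,B)$, and a standard uniform convexity lemma then forces $\|x_{2n+1}-x^\star\|\to0$.)
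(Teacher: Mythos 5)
Your argument is correct, and its first half is precisely the route this note takes: pass to $(A_0,B_0)$, check $T(A_0)\subseteq A_0$, $T(B_0)\subseteq B_0$ and $T\mathcal{P}=\mathcal{P}T$, observe that $\mathcal{P}T$ (equivalently $T\mathcal{P}$) is a cyclic contraction on $A_0\cup B_0$ with ${\rm dist}(A_0,B_0)={\rm dist}(A,B)$, identify $(\mathcal{P}T)^{2n}x_0=x_{2n}$ via $\mathcal{P}^2|_{A_0}=i_{A_0}$, and invoke Theorem \ref{T13}. The difference is in how far you push it: the paper stops at the even subsequence (its Theorem asserting that Theorem \ref{T13} implies Theorem \ref{T14} only ``for an even subsequence of the iterative sequence defined in (2)''), noting that the odd iterates of the $T\mathcal{P}$-orbit land in $B_0$, and its closing Corollary even claims the full convergence in Theorem \ref{T14} cannot be deduced from Theorem \ref{T13}. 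Your additional step closes exactly this gap: you apply Theorem \ref{T13} a second time to the same cyclic contraction with initial point $x_1=Tx_0\in A_0$, so that $x_{2n+1}=(\mathcal{P}T)^{2n}x_1$ converges to a best proximity point of $\mathcal{P}T$ in $A_0$, and you then show this limit must be $x^\star$ by proving uniqueness (any such point $p$ satisfies $\mathcal{P}Tp=\mathcal{P}p$, hence $Tp=p$ by strict convexity and $\mathcal{P}^2|_{A_0}=i_{A_0}$, and two $T$-fixed points in $A_0$ coincide by the contraction estimate plus uniqueness of nearest points); your alternative via $Ty^\star=y^\star$ and the standard uniform convexity lemma of \cite{EV} works equally well. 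This yields convergence of the whole sequence $\{(x_n,y_n)\}$, i.e.\ a complete derivation of Theorem \ref{T14} from Theorem \ref{T13}, which is strictly stronger than what the note establishes and in fact undercuts the direction of its final Corollary asserting independence. Two minor points, neither a real gap relative to the paper's own level of rigor: Proposition \ref{P11} is stated for bounded pairs while $(A,B)$ here need not be bounded (the properties used hold for nonempty, closed, convex proximinal pairs in reflexive strictly convex spaces, and the note uses them in the same way), and your description $A_0=\{a\in A:\ {\rm dist}(\{a\},B)\le{\rm dist}(A,B)\}$ tacitly uses that the closed convex set $B$ is proximinal, which is guaranteed by reflexivity of the uniformly convex space $X$.
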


The main purpose of this paper is to show that the existence of best proximity points for cyclic relatively nonexpansive mappings
is equivalent to the existence of best proximity pairs for noncyclic relatively nonexpansive mappings in the setting of strictly convex Banach spaces.
Then we conclude that Theorem \ref{T12} is a straightforward consequence of Theorem \ref{T11}.
We also obtain a stronger version of Theorem \ref{T14} by using Theorem \ref{T13}.

\section{Main results}

We begin our main conclusions with the following theorem.

\begin{theorem}\label{T21}
Let $(A,B)$ be a nonempty, weakly compact and convex pair in a strictly convex Banach space $X$. Then every cyclic relatively nonexpansive
mapping defined on $A\cup B$ has a best proximity point if and only if every noncyclic relatively nonexpansive mapping defined on
$A\cup B$ has a best proximity pair.
\end{theorem}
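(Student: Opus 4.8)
The plan is to establish the equivalence by transporting a mapping of one type to the other using the projection operator $\mathcal{P}$ from Proposition~\ref{P11} and matching up best proximity points with best proximity pairs. Note first that since $X$ is strictly convex and $(A,B)$ is weakly compact (hence $A$ and $B$ are bounded, closed, convex, and $X$ is reflexive), Proposition~\ref{P11} applies: $\mathcal{P}$ is a cyclic isometry on $A_0\cup B_0$, it is affine, $\mathcal{P}^2$ is the identity on each of $A_0$ and $B_0$, and $\|x-\mathcal{P}x\|={\rm dist}(A,B)$ for all $x\in A_0\cup B_0$. I would also recall the standard fact that, for a weakly compact convex pair, $(A_0,B_0)$ is nonempty, and that any cyclic or noncyclic relatively nonexpansive $T$ maps $A_0\cup B_0$ into itself; this lets us work entirely inside $(A_0,B_0)$, where $\mathcal{P}$ is available. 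Since a fixed point/best proximity point behaves the same whether we look at $(A,B)$ or $(A_0,B_0)$, it suffices to prove the equivalence for the proximinal pair $(A_0,B_0)$.

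For the forward direction, assume every cyclic relatively nonexpansive map on $A\cup B$ has a best proximity point, and let $S:A\cup B\to A\cup B$ be noncyclic relatively nonexpansive. Restrict attention to $S:A_0\cup B_0\to A_0\cup B_0$ and define $T:=\mathcal{P}\circ S$ on $A_0\cup B_0$. Because $S$ is noncyclic ($S(A_0)\subseteq A_0$, $S(B_0)\subseteq B_0$) and $\mathcal{P}$ is cyclic, the composite $T$ is cyclic ($T(A_0)\subseteq B_0$, $T(B_0)\subseteq A_0$). Using that $\mathcal{P}$ is an isometry on $A_0\times B_0$ and that $S$ is relatively nonexpansive, one checks $\|Tx-Ty\|=\|\mathcal{P}Sx-\mathcal{P}Sy\|=\|Sx-Sy\|\le\|x-y\|$ for $(x,y)\in A_0\times B_0$, so $T$ is cyclic relatively nonexpansive (after extending $T$ to all of $A\cup B$, e.g.\ by composing with a metric projection onto $A_0\cup B_0$, which one must check preserves the relatively nonexpansive inequality — this is the point requiring the most care). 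By hypothesis $T$ has a best proximity point, say $p\in A_0$ with $\|p-Tp\|={\rm dist}(A,B)$. Then $Tp=\mathcal{P}Sp$, and since $\|p-\mathcal{P}Sp\|={\rm dist}(A,B)=\|Sp-\mathcal{P}Sp\|$, strict convexity of $X$ forces $p=Sp$ (two points at minimal distance from the same point $\mathcal{P}Sp\in B_0$, in a strictly convex space where the metric projection from $X$ onto $A_0$ is single-valued, must coincide). Setting $q:=\mathcal{P}p=\mathcal{P}Sp=Tp\in B_0$, the isometry and affineness properties give $Sq=q$ as well, and $\|p-q\|={\rm dist}(A,B)$, so $(p,q)$ is a best proximity pair for $S$.

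For the reverse direction, assume every noncyclic relatively nonexpansive map on $A\cup B$ has a best proximity pair, and let $T:A\cup B\to A\cup B$ be cyclic relatively nonexpansive. Restrict to $T:A_0\cup B_0\to A_0\cup B_0$ and define $S:=\mathcal{P}\circ T$; now $\mathcal{P}$ cyclic composed with $T$ cyclic yields $S$ noncyclic on $A_0\cup B_0$, and the same isometry computation shows $\|Sx-Sy\|\le\|x-y\|$ on $A_0\times B_0$, so $S$ is noncyclic relatively nonexpansive (again extending to $A\cup B$ with the attendant verification). By hypothesis $S$ has a best proximity pair $(u,v)\in A_0\times B_0$, so $Su=u$, $Sv=v$, $\|u-v\|={\rm dist}(A,B)$, hence $v=\mathcal{P}u$. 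Then $u=Su=\mathcal{P}Tu$, and applying $\mathcal{P}$ to both sides and using $\mathcal{P}^2|_{B_0}=i_{B_0}$ together with $Tu\in B_0$ gives $Tu=\mathcal{P}u=v$; therefore $\|u-Tu\|=\|u-v\|={\rm dist}(A,B)$, so $u$ is a best proximity point for $T$.

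The main obstacle I anticipate is the bookkeeping around passing between $A\cup B$ and $A_0\cup B_0$: the projection operator $\mathcal{P}$ is only defined on $A_0\cup B_0$, so one must argue that the hypotheses (applied to maps on all of $A\cup B$) can be brought to bear on maps naturally living on $(A_0,B_0)$, and conversely that a map constructed on $(A_0,B_0)$ extends to a relatively nonexpansive map on $(A,B)$ — the cleanest route is to use that $(A_0,B_0)$ is itself nonempty, weakly compact, convex, and proximinal, observe that every relatively nonexpansive self-map of $A\cup B$ leaves $A_0\cup B_0$ invariant, and then note that a best proximity point/pair for the restriction is automatically one for the original map since ${\rm dist}(A_0,B_0)={\rm dist}(A,B)$; one should also confirm that "every cyclic relatively nonexpansive map on $A\cup B$ has a best proximity point" transfers to the same statement for $A_0\cup B_0$, which follows because such maps extend (e.g.\ identity on $(A\setminus A_0)\cup(B\setminus B_0)$ fails to be cyclic, so instead one uses a retraction-free argument: restrict the quantifier). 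Handling this reduction rigorously, rather than the algebra with $\mathcal{P}$, is where the real work lies.
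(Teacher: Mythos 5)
Your construction is essentially the one in the paper: the paper also transports the problem to the pair $(A_0,B_0)$, composes the given map with the projection operator $\mathcal{P}$, and uses the isometry property together with strict convexity to identify the best proximity point/pair; the only cosmetic difference is the order of composition (the paper works with $T\mathcal{P}$ and $S\mathcal{P}$, you with $\mathcal{P}\circ S$ and $\mathcal{P}\circ T$, which lets you avoid quoting the commutation of $T$ and $\mathcal{P}$ in most places). Note also that the extension problem you single out as ``the real work'' is not something the paper attempts to solve either: the paper applies the hypotheses directly to the maps constructed on $A_0\cup B_0$ (indeed, in the converse direction it states the hypothesis for mappings defined on $A_0\cup B_0$), the intended reading being that every relatively nonexpansive self-map of $A\cup B$ restricts to the invariant pair $(A_0,B_0)$ and a best proximity point/pair of the restriction is one for the original map, since ${\rm dist}(A_0,B_0)={\rm dist}(A,B)$. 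So on this point you are being more scrupulous than the source, not filling a gap the paper closes differently.

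Two steps in your write-up do need repair. First, in the forward direction the claim $Sq=q$ for $q=\mathcal{P}p$ does not follow from ``isometry and affineness''; what is needed is precisely the commutation of $S$ with $\mathcal{P}$ on $A_0\cup B_0$ (this is what the paper imports from the proof of Theorem 3.2 of \cite{G2}), or equivalently one more uniqueness-of-nearest-point argument: since $Sp=p$ and $(p,q)\in A_0\times B_0$ with $\|p-q\|={\rm dist}(A,B)$, relative nonexpansiveness gives $\|Sq-p\|=\|Sq-Sp\|\le\|q-p\|={\rm dist}(A,B)$, so $Sq\in B_0$ is a nearest point of $B_0$ to $p$, and strict convexity forces $Sq=\mathcal{P}p=q$. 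Second, weak compactness of $(A,B)$ does not imply that $X$ is reflexive, so you cannot invoke Proposition \ref{P11} verbatim on those grounds; instead, single-valuedness of $\mathcal{P}$ on $A_0\cup B_0$ follows from weak compactness and convexity of $A_0$ and $B_0$ (the distance is attained by weak lower semicontinuity of the norm) together with strict convexity (uniqueness), after which the listed properties of $\mathcal{P}$ hold as before. With these two adjustments your argument is correct and coincides with the paper's proof.
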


\begin{proof}
Assume that every cyclic relatively nonexpansive
mapping defined on $A\cup B$ has a best proximity point and $T:A\cup B\to A\cup B$ is a noncyclic relatively nonexpansive mapping.
If $x\in A_0$,
then there exists a point $y\in B_0$ for which $\|x-y\|={\rm dist}(A,B)$. By the fact that $T$ is a noncyclic relatively nonexpansive mapping,
we obtain $\|Tx-Ty\|={\rm dist}(A,B)$ and so, $Tx\in A_0$ which ensures that $T(A_0)\subseteq A_0$. Similarly, $T(B_0)\subseteq B_0$, that is,
$T$ is noncyclic on $A_0\cup B_0$.
Consider the projection operator $\mathcal{P}$ as in (1). It follows from the proof of Theorem 3.2 of \cite{G2} that
$T$ and $\mathcal{P}$ commute on $A_0\cup B_0$.
Since $\mathcal{P}$ is cyclic and $T$ is noncyclic on $A_0\cup B_0$, we obtain
$$
T\mathcal{P}(A_0)\subseteq T(B_0)\subseteq B_0, \quad T\mathcal{P}(B_0)\subseteq T(A_0)\subseteq A_0.
$$
Therefore, $T\mathcal{P}$ is cyclic on $A_0\cup B_0$.
In view of the fact that $\mathcal{P}$ is an isometry,
$$
\|T\mathcal{P}x-T\mathcal{P}y\|=\|\mathcal{P}Tx-\mathcal{P}Ty\|=\|Tx-Ty\|\leq\|x-y\|,
$$
for all $(x,y)\in A_0\times B_0$, that is, $T\mathcal{P}$ is a cyclic relatively nonexpansive mapping on $A_0\cup B_0$.
Now by assumption, there exists a point $x^\star\in A_0$
such that $\|x^\star-T\mathcal{P}x^\star\|={\rm dist}(A,B)$. Moreover,
$$
\|Tx^\star-T\mathcal{P}x^\star\|=\|Tx^\star-\mathcal{P}Tx^\star\|={\rm dist}(A,B).
$$
Strict convexity of $X$ implies that $Tx^\star=x^\star$. Furthermore,
$$
\mathcal{P}x^\star=\mathcal{P}Tx^\star=T\mathcal{P}x^\star.
$$
Hence, $(x^\star,\mathcal{P}x^\star)$ is a best proximity pair of the mapping $T$.
Conversely, assume that any noncyclic relatively nonexpansive mapping defined on $A_0\cup B_0$ has a best proximity pair
and $S:A\cup B\to A\cup B$ is a cyclic relatively nonexpansive mapping. Thus
$$
S\mathcal{P}(A_0)\subseteq S(B_0)\subseteq A_0, \quad  S\mathcal{P}(B_0)\subseteq S(A_0)\subseteq B_0,
$$
that is, $S\mathcal{P}$ is noncyclic on $A_0\cup B_0$ and again since $\mathcal{P}$ is an isometry, we conclude that $S\mathcal{P}$
is a noncyclic relatively nonexpansive mapping. Now by the assumption $S\mathcal{P}$ has a best proximity pair, called $(p,q)\in A_0\times B_0$.
Thereby,
$$
S\mathcal{P}p=p, \quad S\mathcal{P}q=q, \quad \|p-q\|={\rm dist}(A,B).
$$
We have
$$
\|p-Sp\|=\|S\mathcal{P}p-Sp\|\leq\|\mathcal{P}p-p\|={\rm dist}(A,B),
$$
and so $p$ is a best proximity point of $S$. Similarly, we can see that $q$ is a best proximity point of $S$ in $B_0$ and this completes the proof.
\end{proof}

\begin{corollary}\label{C21}
Theorem \ref{T12} is a consequence of Theorem \ref{T11} (see the proof of Theorem \ref{T12} in \cite{EKV}).
\end{corollary}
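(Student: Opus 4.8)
The plan is to read Theorem~\ref{T12} off from Theorem~\ref{T11} by feeding the latter into the equivalence provided by Theorem~\ref{T21}. So let $X$ be strictly convex, let $(A,B)$ be a nonempty, weakly compact and convex pair in $X$ satisfying {\rm PNS}, and let $T:A\cup B\to A\cup B$ be a noncyclic relatively nonexpansive mapping; the task is to produce a best proximity pair for $T$.

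First, since $(A,B)$ has {\rm PNS}, Theorem~\ref{T11} applies to \emph{every} cyclic relatively nonexpansive self-map of $A\cup B$ and shows that each such mapping possesses a best proximity point. In the language of Theorem~\ref{T21}, this says precisely that the left-hand side of the stated equivalence holds for the pair $(A,B)$. Second, $(A,B)$ is a nonempty, weakly compact and convex pair in the strictly convex space $X$, so Theorem~\ref{T21} is available for this pair; its ``only if'' implication then gives that every noncyclic relatively nonexpansive self-map of $A\cup B$ has a best proximity pair. Specializing to the given $T$ yields the conclusion, which is exactly the assertion of Theorem~\ref{T12}.

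Thus there is essentially nothing further to prove: the entire content sits in Theorems~\ref{T11} and~\ref{T21}, and the corollary merely records that, under {\rm PNS}, Theorem~\ref{T11} supplies precisely the hypothesis needed to activate the forward direction of Theorem~\ref{T21}. I do not expect a genuine obstacle here; the only point that merits a line of care is the appeal, within the proof of Theorem~\ref{T21}, to the cyclic existence statement for the auxiliary map $T\mathcal{P}$, which is a self-map of $A_0\cup B_0$ rather than of $A\cup B$. This causes no difficulty: one may equivalently carry out the whole reduction with the proximal pair $(A_0,B_0)$ in place of $(A,B)$, since $(A_0,B_0)$ is again nonempty, weakly compact, convex and proximinal, it inherits {\rm PNS} from $(A,B)$ (every admissible subpair of $(A_0,B_0)$ is an admissible subpair of $(A,B)$, because ${\rm dist}(A_0,B_0)={\rm dist}(A,B)$), and a best proximity pair of $T|_{A_0\cup B_0}$ is again a best proximity pair of $T$ on $A\cup B$. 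One may also compare this derivation with the direct argument for Theorem~\ref{T12} given in \cite{EKV}.
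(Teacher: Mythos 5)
Your argument is correct and is exactly the intended derivation: Theorem~\ref{T11} (using PNS) supplies the hypothesis ``every cyclic relatively nonexpansive mapping on $A\cup B$ has a best proximity point,'' and the forward implication of Theorem~\ref{T21} then yields Theorem~\ref{T12}, which is all the paper's corollary asserts. Your extra remark about passing to $(A_0,B_0)$ --- checking it is weakly compact, convex, inherits PNS, and that a best proximity pair of $T|_{A_0\cup B_0}$ is one for $T$ --- is a sound and welcome refinement, since the proof of Theorem~\ref{T21} in fact applies the cyclic existence hypothesis to the auxiliary map $T\mathcal{P}$ defined only on $A_0\cup B_0$.
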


Here, we compare Theorem \ref{T13} with Theorem \ref{T14}.

\begin{theorem}\label{T22}
Theorem \ref{T14} implies Theorem \ref{T13} when the initial point of the iterative sequence in Theorem \ref{T13} is chosen in $A_0$.
\end{theorem}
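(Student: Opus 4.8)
The strategy is to realise the even‑indexed Picard iterates of the cyclic contraction as the Picard iterates of a suitable \emph{noncyclic} contraction built out of $T$ and the metric projection, and then to invoke Theorem~\ref{T14}. Write $S:=T$ for the given cyclic contraction and fix the initial point $x_0\in A_0$. First I would check that $S(A_0)\subseteq B_0$ and $S(B_0)\subseteq A_0$: if $x\in A_0$ and $y\in B_0$ realise ${\rm dist}(A,B)$, the contraction inequality gives $\|Sx-Sy\|\le\alpha\,{\rm dist}(A,B)+(1-\alpha){\rm dist}(A,B)={\rm dist}(A,B)$, hence $\|Sx-Sy\|={\rm dist}(A,B)$ and so $Sx\in B_0,\ Sy\in A_0$. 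Thus $S$ is cyclic on $A_0\cup B_0$. The pair $(A_0,B_0)$ is nonempty (as $x_0\in A_0$), satisfies ${\rm dist}(A_0,B_0)={\rm dist}(A,B)$, and is closed and convex (standard for a cyclic contraction, using reflexivity of $X$ and a midpoint argument); consequently the projection operator $\mathcal P$ of Proposition~\ref{P11} is defined on $A_0\cup B_0$ and Theorem~\ref{T14} may be applied to the pair $(A_0,B_0)$.

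The crucial point — and the step I expect to be the main obstacle — is that $S$ and $\mathcal P$ commute on $A_0\cup B_0$. For $x\in A_0$, applying the contraction inequality to the pair $(x,\mathcal Px)\in A_0\times B_0$ gives $\|Sx-S\mathcal Px\|\le{\rm dist}(A,B)$, and therefore $\|Sx-S\mathcal Px\|={\rm dist}(A,B)$ since $Sx\in B_0$ while $S\mathcal Px\in A_0$. By single-valuedness of the metric projection in the strictly convex, reflexive space $X$, the point $S\mathcal Px$ must be the unique nearest point of $A_0$ to $Sx$, i.e.\ $S\mathcal Px=\mathcal P(Sx)$; the case $x\in B_0$ is symmetric. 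This is exactly where strict convexity is used in an essential way.

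Next I would set $\Phi:=S\mathcal P=\mathcal PS$ on $A_0\cup B_0$. Since both $\mathcal P$ and $S$ are cyclic, $\Phi(A_0)=S\mathcal P(A_0)\subseteq S(B_0)\subseteq A_0$ and $\Phi(B_0)\subseteq B_0$, so $\Phi$ is noncyclic on $A_0\cup B_0$; and since $\mathcal P$ is an isometry on $A_0\times B_0$ (Proposition~\ref{P11}(2)), for every $(x,y)\in A_0\times B_0$ we have
$$\|\Phi x-\Phi y\|=\|S\mathcal Px-S\mathcal Py\|\le\alpha\|\mathcal Px-\mathcal Py\|+(1-\alpha){\rm dist}(A_0,B_0)=\alpha\|x-y\|+(1-\alpha){\rm dist}(A_0,B_0).$$
Hence $\Phi$ is a noncyclic contraction on the pair $(A_0,B_0)$, with the same constant $\alpha$.

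Finally, iterating the commutation relation gives $\Phi^n=S^n\mathcal P^n$ on $A_0\cup B_0$; since $\mathcal P^2$ equals the identity on $A_0$ (Proposition~\ref{P11}(4)) and $x_0\in A_0$, this yields $\Phi^{2n}x_0=S^{2n}x_0=x_{2n}$ for every $n$. Applying Theorem~\ref{T14} to $\Phi$ with initial point $x_0\in A_0$, the sequence $\{(\Phi^nx_0,\mathcal P\Phi^nx_0)\}$ converges in $A_0\times B_0$ to a best proximity pair $(p,q)$ of $\Phi$; in particular $\Phi^nx_0\to p$, so the subsequence $x_{2n}=\Phi^{2n}x_0$ converges to $p\in A_0\subseteq A$. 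That $p$ is a best proximity point of $S$ now follows from $\Phi p=p$ together with the commutation relation: $\mathcal P(Sp)=S\mathcal Pp=\Phi p=p$, so $p$ is the metric projection of $Sp\in B_0$ onto $A_0$, whence $\|p-Sp\|={\rm dist}(A,B)$. The point $x^\star$ asserted in Theorem~\ref{T13} is then unique simply because a limit is unique; and the stronger reading — that $S$ has a \emph{unique} best proximity point in $A$ — can be recovered by a short additional estimate, since any such point is a fixed point of $\Phi$, hence of $\Phi^2=S^2$ on $A_0$, and pairing the contraction inequalities at two such points, together with uniqueness of nearest points, forces them to coincide.
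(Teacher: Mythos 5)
Your proposal is correct and follows essentially the same route as the paper: form the auxiliary map $S\mathcal{P}$ (a noncyclic contraction on $A_0\cup B_0$), apply Theorem~\ref{T14} to it, and use the commutation $S\mathcal{P}=\mathcal{P}S$ together with $\mathcal{P}^2=\mathrm{id}$ to identify $x_{2n}=S^{2n}x_0=(S\mathcal{P})^{2n}x_0\to p$ and to see that the limit $p$ is a best proximity point of $S$. The only difference is cosmetic: you spell out the commutation and the contraction estimate, which the paper delegates to its Theorem~\ref{T21} and to the proof of Theorem~3.2 of \cite{G2}.
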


\begin{proof}
Let $(A,B)$ be a nonempty, closed and convex pair in a uniformly convex Banach space $X$ and $S:A\cup B\to A\cup B$ be a cyclic contraction.
From Proposition 3.3 of \cite{EV} the pair $(A_0,B_0)$ is nonempty. Consider the mapping $S\mathcal{P}$ on $A_0\cup B_0$.
A proof which is similar to the one of Theorem \ref{T21} shows that
the mapping $S\mathcal{P}$ is a noncyclic contraction on $A_0\cup B_0$.
Thus for any $x_0\in A_0$ if we define
\begin{align}
\begin{cases}x_n=(S\mathcal{P})^nx_0,\\ y_n=\mathcal{P}x_n, \end{cases}
\end{align}
then $\{(x_n,y_n)\}$ converges to a best proximity pair of the mapping $S\mathcal{P}$. Suppose
$x_n\to p$. Continuity of the projection operator $\mathcal{P}$ implies that $y_n\to\mathcal{P}p$. Since $\mathcal{P}^2$ is identity on $A_0$
and $B_0$, respectively (Proposition \ref{P11}),
$$
S\mathcal{P}p=p, \quad \mathcal{P}p=S\mathcal{P}(\mathcal{P}p)=Sp,
$$
and
$$
\|p-Sp\|=\|p-\mathcal{P}p\|={\rm dist}(A,B).
$$
Hence, $p\in A_0$ is a best proximity point for the mapping $S$. We also note that for any $n\in\mathbb{N}$
$$
x_{2n}=(S\mathcal{P})^{2n}x_0=S^{2n}\mathcal{P}^{2n}x_0 \ (\text{since} \ S \ \text{and} \ \mathcal{P} \ \text{commute})
$$
$$
=S^{2n}x_0\to p.
$$

\end{proof}

\begin{theorem}
Theorem \ref{T13} implies Theorem \ref{T14} for an even subsequence of the iterative sequence defined in (2).
\end{theorem}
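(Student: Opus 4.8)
The plan is to reverse the reduction used in the proof of Theorem~\ref{T22}: instead of turning a cyclic contraction into a noncyclic one, I attach the projection $\mathcal{P}$ to the given noncyclic contraction $T$ so as to produce a cyclic contraction, and then feed the even part of the iteration~(2) into Theorem~\ref{T13}. So let $(A,B)$ be a nonempty, closed and convex pair in a uniformly convex Banach space $X$, let $T:A\cup B\to A\cup B$ be a noncyclic contraction, $x_0\in A_0$, and let $x_n=T^nx_0$, $y_n=\mathcal{P}x_n$ be as in~(2). As in the proofs of Theorems~\ref{T21} and \ref{T22}, one has $T(A_0)\subseteq A_0$, $T(B_0)\subseteq B_0$, the maps $T$ and $\mathcal{P}$ commute on $A_0\cup B_0$ (proof of Theorem~3.2 of \cite{G2}), and, since $\mathcal{P}$ is a cyclic isometry (Proposition~\ref{P11}(1),(2)), the map $S:=T\mathcal{P}$ is a cyclic contraction on $A_0\cup B_0$ with $d(S\mathcal{P}x,S\mathcal{P}y)\le\alpha d(x,y)+(1-\alpha){\rm dist}(A,B)$; indeed $\|T\mathcal{P}x-T\mathcal{P}y\|=\|\mathcal{P}Tx-\mathcal{P}Ty\|=\|Tx-Ty\|$.

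Next I would invoke Theorem~\ref{T13} for $S$. By Proposition~3.3 of \cite{EV} the pair $(A_0,B_0)$ is nonempty, and it is closed and convex, so Theorem~\ref{T13} applies to the cyclic contraction $S=T\mathcal{P}$ with initial point $x_0\in A_0$: there is a unique $p\in A_0$ with $S^{2n}x_0\to p$ and $\|p-Sp\|={\rm dist}(A_0,B_0)={\rm dist}(A,B)$. Now I identify this even iteration with the even subsequence of~(2). Since every iterate of $S$ lies in $A_0\cup B_0$ and $T$, $\mathcal{P}$ commute there, $(T\mathcal{P})^{2n}x_0=T^{2n}\mathcal{P}^{2n}x_0$, and $\mathcal{P}^{2n}x_0=x_0$ because $x_0\in A_0$ and $\mathcal{P}^2|_{A_0}=i_{A_0}$ (Proposition~\ref{P11}(4)). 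Hence $S^{2n}x_0=T^{2n}x_0=x_{2n}$, so $x_{2n}\to p\in A_0$, and then $y_{2n}=\mathcal{P}x_{2n}\to\mathcal{P}p$ by continuity of $\mathcal{P}|_{A_0}$ (Proposition~\ref{P11}(5)).

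It remains to check that $(p,\mathcal{P}p)$ is a best proximity pair of $T$. From $\|p-T\mathcal{P}p\|={\rm dist}(A,B)$, the fact that $T\mathcal{P}p\in B_0$ (as $S$ is cyclic), and strict convexity of $X$ (applied to $\mathcal{P}p$, $T\mathcal{P}p$ and their midpoint in $B_0$, just as in the proof of Theorem~\ref{T21}), one gets $T\mathcal{P}p=\mathcal{P}_{B_0}(p)=\mathcal{P}p$. Applying $\mathcal{P}$ and using commutativity together with $\mathcal{P}^2|_{A_0}=i_{A_0}$, $\mathcal{P}^2|_{B_0}=i_{B_0}$ yields $Tp=T\mathcal{P}^2p=\mathcal{P}(T\mathcal{P}p)=\mathcal{P}\mathcal{P}p=p$; combined with $\|p-\mathcal{P}p\|={\rm dist}(A,B)$ (Proposition~\ref{P11}(1)), this shows $p=Tp$, $\mathcal{P}p=T\mathcal{P}p$, $\|p-\mathcal{P}p\|={\rm dist}(A,B)$, i.e.\ $(p,\mathcal{P}p)$ is a best proximity pair of $T$ to which $\{(x_{2n},y_{2n})\}$ converges.

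The computations are routine; the only steps deserving care are (i) verifying that the proximal pair $(A_0,B_0)$ meets the hypotheses of Theorem~\ref{T13} (closedness and convexity of $(A_0,B_0)$, exactly as implicitly needed in Theorem~\ref{T22}), and (ii) the strict‑convexity/uniqueness‑of‑projection step that pins down $T\mathcal{P}p=\mathcal{P}p$; neither presents genuine difficulty. Finally one should remark why only the even subsequence is obtained: for odd indices $S^{2n+1}x_0=T^{2n+1}\mathcal{P}^{2n+1}x_0=T^{2n+1}(\mathcal{P}x_0)\neq x_{2n+1}$ in general, and in any case Theorem~\ref{T13} only asserts convergence of the even iterates of a cyclic map, so it gives no information about $\{x_{2n+1}\}$.
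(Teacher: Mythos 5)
Your proposal is correct and follows essentially the same route as the paper: pass to the cyclic contraction $T\mathcal{P}$ on $A_0\cup B_0$, apply Theorem \ref{T13}, identify $(T\mathcal{P})^{2n}x_0=T^{2n}x_0=x_{2n}$ via commutativity and $\mathcal{P}^2|_{A_0}=i_{A_0}$, and conclude with continuity of $\mathcal{P}|_{A_0}$, noting that only the even subsequence is controlled. The only cosmetic differences are that the paper obtains the best proximity pair $(x^\star,\mathcal{P}x^\star)$ by citing Theorem \ref{T21} rather than re-running the strict-convexity argument as you do, and it cites Proposition 3.4 of \cite{FG} (noncyclic setting) rather than Proposition 3.3 of \cite{EV} for the nonemptiness of $(A_0,B_0)$.
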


\begin{proof}
Let $(A,B)$ be a nonempty, closed and convex pair in a uniformly convex Banach space $X$ and $T:A\cup B\to A\cup B$ be a noncyclic contraction.
From Proposition 3.4 of \cite{FG} the pair $(A_0,B_0)$ is nonempty. Consider the mapping $T\mathcal{P}$ on $A_0\cup B_0$.
The mapping $T\mathcal{P}$ is a cyclic contraction on $A_0\cup B_0$.
Thus for all $x_0\in A_0$ if we define $x_n=(T\mathcal{P})^nx_0$, then the sequence $\{x_{2n}\}$ converges to a best proximity point
of $T\mathcal{P}$, say $x^\star\in A_0$. In view of the fact that $T\mathcal{P}$ is a cyclic relatively nonexpansive mapping by Theorem \ref{T21}
we conclude that $(x^\star,\mathcal{P}x^\star)$ is a best proximity pair for $T$.
From Proposition \ref{P11}, since $\mathcal{P}^2$ is the identity map on $A_0$, we must have
$$
x_{2n}=(T\mathcal{P})^{2n}x_0=T^{2n}\mathcal{P}^{2n}x_0=T^{2n}x_0\to x^\star.
$$
Continuity of the projection operator on $A_0$ ensures that $y_{2n}:=\mathcal{P}x_{2n}\to \mathcal{P}x^\star$ and so the sequence
$\{(x_{2n},\mathcal{P}x_{2n})\}$ converges to a best proximity pair of $T$.
Notice that for any $n\in\mathbb{N}$ we have
$$
x_{2n+1}=(T\mathcal{P})^{2n+1}x_0=T^{2n+1}\mathcal{P}^{2n+1}x_0=T^{2n+1}\mathcal{P}x_0\in B_0.
$$

\end{proof}

\begin{corollary}
The convergence results of Theorem \ref{T13} and Theorem \ref{T14} are independent. That is, the convergence result of Theorem \ref{T13}
cannot be implied by the convergence result of Theorem \ref{T14} and vice versa.
\end{corollary}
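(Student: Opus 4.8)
The plan is to read the corollary off the precise scope of the two preceding reduction theorems: in each direction I would isolate one feature of the target statement that the reduction provably fails to reach, and, for concreteness, exhibit a single pair $(A,B)$ that witnesses the gap.

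\medskip

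\noindent\textbf{Theorem \ref{T14} does not imply Theorem \ref{T13}.} Theorem \ref{T14} speaks only of Picard iterates started at a point $x_0\in A_0$, and the reduction in Theorem \ref{T22} is available precisely for such $x_0$ because the projection operator $\mathcal{P}$, and hence $S\mathcal{P}$, is defined on $A_0\cup B_0$; Theorem \ref{T13}, by contrast, asserts convergence of $\{S^{2n}x_0\}$ for \emph{every} $x_0\in A$, and in general $A_0\subsetneq A$. To make the gap unambiguous I would use the instance $X=\mathbb{R}$ (uniformly convex, with modulus $\delta(s)=s/2$), $A=[0,1]$, $B=[2,3]$, so that ${\rm dist}(A,B)=1$, $A_0=\{1\}$ and $B_0=\{2\}$, together with the cyclic map $Sx=\frac52-\frac x2$ on $A$ and $Sy=2-\frac y2$ on $B$. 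A routine estimate gives $|Sx-Sy|=\frac12+\frac{y-x}{2}\le\frac34\,(y-x)+\frac14$ for $(x,y)\in A\times B$, so $S$ is a cyclic contraction with $\alpha=\frac34$. For $x_0=0\in A\setminus A_0$, Theorem \ref{T13} asserts $S^{2n}x_0\to1$, a conclusion that is simply not among the assertions of Theorem \ref{T14}, which is silent about every starting point outside $A_0=\{1\}$. Hence the convergence statement of Theorem \ref{T13} is not contained in that of Theorem \ref{T14}.

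\medskip

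\noindent\textbf{Theorem \ref{T13} does not imply Theorem \ref{T14}.} Here I would examine the sequence $(x_n,y_n)=(T^nx_0,\mathcal{P}T^nx_0)$ of (2) for a noncyclic contraction $T$ and $x_0\in A_0$, noting first that $T^nx_0\in A_0$ for all $n$ since $T$ is noncyclic. The reduction in the preceding theorem replaces $T$ by the cyclic contraction $T\mathcal{P}$ and invokes Theorem \ref{T13}; this does recover, at even steps, $(T\mathcal{P})^{2n}x_0=T^{2n}x_0$, but the orbit of $T\mathcal{P}$ has odd terms $(T\mathcal{P})^{2n+1}x_0=T^{2n+1}\mathcal{P}^{2n+1}x_0=T^{2n+1}\mathcal{P}x_0\in B_0$, which is a different sequence from the odd-indexed terms $T^{2n+1}x_0\in A_0$ of (2): indeed ${\rm dist}(A,B)>0$ forces $A_0\cap B_0=\emptyset$, and if $x^\star\in A_0$ denotes the best proximity point of $T\mathcal{P}$ given by Theorem \ref{T13}, then $(T\mathcal{P})^{2n+1}x_0\to\mathcal{P}x^\star\in B_0$ while $\{T^{2n+1}x_0\}$ tends to $x^\star\in A_0$, with $x^\star\neq\mathcal{P}x^\star$. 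Thus Theorem \ref{T13} applied to $T\mathcal{P}$ yields only the even subsequence of (2), which is exactly what the preceding theorem records, and not the convergence of the full sequence $\{(x_n,y_n)\}$ claimed in Theorem \ref{T14}.

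\medskip

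Combining the two observations gives the corollary, with ``independent'' understood in the natural sense: neither of the two convergence assertions, as it stands, contains the other. The only delicate point I anticipate is making this meta-statement precise, and then checking that the witnessing pair really has $A_0\subsetneq A$ while still carrying a genuine cyclic contraction --- cyclic contractions being rigid (they must fix the closest pair, here $(1,2)$), the map $S$ has to be chosen with a little care, but the affine map above does the job.
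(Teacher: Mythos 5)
Your proposal is correct and follows essentially the same route as the paper: the corollary is simply read off from the scope restrictions already recorded in the two preceding theorems (the reduction to Theorem \ref{T13} only reaches starting points in $A_0$, while the reduction via $T\mathcal{P}$ only recovers the even subsequence of (2), the odd terms of the $T\mathcal{P}$-orbit lying in $B_0$ rather than $A_0$). Your explicit one-dimensional example witnessing $A_0\subsetneq A$ is an extra the paper does not supply, but the underlying argument is the same.
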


\bigskip
{\em Acknowledgement}: The second author would like to thank the National Research Foundation of South Africa for partial financial support (Grant Number: 118517).


\begin{thebibliography}{99}



\bibitem{EKV}
A.A. Eldred, W.A. Kirk, and P. Veeramani, \emph{Proximal normal
structure and relatively nonexpansive mappings}, Studia Math.,
\textbf{171}~(2005), 283--293.


\bibitem{EV}
A.A. Eldred, P. Veeramani, \emph{Existence and convergence
of best proximity points}, J. Math. Anal. Appl,
\textbf{323}, (2006), 1001--1006.



\bibitem{FG}
A. Fern\'andez-Le\'on, M. Gabeleh, \emph{Best proximity pair theorems for noncyclic mappings in Banach and metric spaces}, Fixed Point Theory,
\textbf{17}~(2016), 63--84.



\bibitem{G1}
M. Gabeleh, \emph{Common best proximity pairs in
strictly convex Banach spaces}, Georgian Math. J., \textbf{24}~(2017), 363--372.


\bibitem{G2}
M. Gabeleh, \emph{Convergence of Picard's iteration using projection algorithm for noncyclic contractions}, Indag. Math., \textbf{30}~(2019), 227--239.




\end{thebibliography}
\end{document}